\def\p{\partial}
\def\R{\mathbb{R}}
\def\vv<#1>{\langle#1\rangle}
\def\1{\mathbf{1}}
\def\XXint#1#2{\setbox0=\hbox{$#1{#2}{\int}$}{#2}\kern-.5\wd0 }
\def\XXint#1#2#3{{\setbox0=\hbox{$#1{#2#3}{\int}$}
     \vcenter{\hbox{$#2#3$}}\kern-.5\wd0}}
\def\vv<#1>{{\left\langle#1\right\rangle}}
\newtheorem{thm}{Theorem}[section]
\theoremstyle{definition}
\newtheorem{defn}{Definition}[section]
\theoremstyle{remark}
\numberwithin{equation}{section}
\begin{document}
\title{Extension and rigidity of Perrin's lower bound estimate for Steklov eigenvalues on graphs}
\author{Yongjie Shi$^1$}
\address{Department of Mathematics, Shantou University, Shantou, Guangdong, 515063, China}
\email{yjshi@stu.edu.cn}
\author{Chengjie Yu$^2$}
\address{Department of Mathematics, Shantou University, Shantou, Guangdong, 515063, China}
\email{cjyu@stu.edu.cn}
\thanks{$^1$Research partially supported by GDNSF with contract no. 2025A1515011144 and NNSF of China with contract no. 11701355. }
\thanks{$^2$ Research partially supported by GDNSF with contract no. 2025A1515011144 and NNSF of China with contract no. 11571215.}
\renewcommand{\subjclassname}{%
  \textup{2020} Mathematics Subject Classification}
\subjclass[2020]{Primary 39A12; Secondary 05C50}
\date{}
\keywords{Steklov operator, Steklov eigenvalue, rigidity}
\begin{abstract}
In this paper, we extend a lower bound estimate for Steklov eigenvalues by Perrin \cite{Pe} on unit-weighted graphs to general weighted graphs and characterise its rigidity.
\end{abstract}
\maketitle\markboth{Shi and  Yu}{Extension and rigidity of Perrin's lower bound estimate }
\section{Introduction}
On a Riemannian manifold $M$ with boundary, the Steklov operator sends the Dirichlet boundary data of a harmonic function to its Neumann boundary data. Steklov operator is a first order nonnegative  self-adjoint elliptic pseudo-differential operator on $\Sigma:=\p M$ (see \cite[Chapter 7]{Ta}). The eigenvalues of the Steklov operator on $M$ is called the Steklov eigenvalues of $M$. Such kinds of eigenvalues were first introduced by Steklov \cite{St} when considering liquid sloshing. It was later found deep applications in geometry (see \cite{Es,FS1,FS2}) and applied mathematics (see \cite{Ku}).  For recent progresses of the topic, interested readers can consult the surveys \cite{CG24} and \cite{GP}.

In recent years, Steklov eigenvalues were introduced to discrete setting by Hua-Huang-Wang \cite{HHW} and Hassannezhad-Miclo \cite{HM} independently. Although the notion is new, there are quite a number of works considering isoperimetric estimate (see \cite{HH,HM,He-Hua1,HHW,HHW2,Pe2}), monotonicity (see \cite{He-Hua2,YY1}), Lichnerowicz estimate (see \cite{SY1,SY2}) and extremum problems (see \cite{YY2,LZ}) of Steklov eigenvalues in the discrete setting.

In this paper, we extend a lower bound for Steklov eigenvalues by Perrin \cite{Pe} on unit-weighted graphs to general weighted graphs and characterise its rigidity. Let's recall two lower bound estimates of Perrin \cite{Pe} first.

In \cite{Pe}, Perrin obtained the following interesting lower bound for graphs equipped with unit weight.
\begin{thm}[Perrin \cite{Pe}]\label{thm-Pe-1}
Let $(G,B)$ be a connected finite graph with boundary such that $E(B,B)=\emptyset$ and equipped with the unit weight. Then,
\begin{equation}\label{eq-Pe-1}
\sigma_2\geq\frac{|B|}{(|B|-1)^2d_B}
\end{equation}
where
\begin{equation}
d_B=\max\{d(x,y)\ |\ x,y\in B\}.
\end{equation}
Moreover, if the equality of \eqref{eq-Pe-1} holds, then $|B|=2$.
\end{thm}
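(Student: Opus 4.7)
The plan is to apply the variational characterization
\[
\sigma_2 \;=\; \min_{\substack{f:B\to\R,\ f\not\equiv 0\\ \sum_{x\in B} f(x) = 0}} \frac{D(u)}{\sum_{x\in B} f(x)^2},
\]
where $u$ is the harmonic extension of $f$ to $V(G)$ and $D(u)=\sum_{xy\in E(G)}(u(x)-u(y))^2$ is the Dirichlet energy. It therefore suffices to prove the reciprocal estimate
\[
\sum_{x\in B} f(x)^2 \;\leq\; \frac{(|B|-1)^2 d_B}{|B|}\, D(u)
\]
for every admissible $f$.

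The central step is to choose a distinguished base point. Pick $x_\ast\in B$ with $|f(x_\ast)| = \max_B |f|$ and, replacing $f$ by $-f$ if necessary, arrange $f(x_\ast)\geq 0$. For any $y\in B\setminus\{x_\ast\}$, telescoping $u(x_\ast)-u(y)$ along a shortest path from $x_\ast$ to $y$ and applying Cauchy-Schwarz gives $(f(x_\ast)-f(y))^2 \leq d(x_\ast,y)\,D(u) \leq d_B\,D(u)$. Summing over $y\in B\setminus\{x_\ast\}$ and applying Cauchy-Schwarz once more,
\[
\Bigl(\sum_{y\neq x_\ast}(f(x_\ast)-f(y))\Bigr)^2 \leq (|B|-1)\sum_{y\neq x_\ast}(f(x_\ast)-f(y))^2 \leq (|B|-1)^2 d_B\,D(u).
\]
The condition $\sum_B f = 0$ collapses the inner sum to $|B|f(x_\ast)$, so $f(x_\ast)^2 \leq \frac{(|B|-1)^2 d_B}{|B|^2}D(u)$. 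Since $x_\ast$ realizes $\max_B |f|$, we conclude
\[
\sum_{x\in B} f(x)^2 \;\leq\; |B|\, f(x_\ast)^2 \;\leq\; \frac{(|B|-1)^2 d_B}{|B|}\, D(u),
\]
which is \eqref{eq-Pe-1}.

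For the rigidity, trace equality backward through each step. Equality in the final comparison $\sum f^2 \leq |B|\, f(x_\ast)^2$ forces $|f(y)| = f(x_\ast)$ for every $y\in B$. Equality in the outer Cauchy-Schwarz forces $f(x_\ast)-f(y)$ to be constant over $y\in B\setminus\{x_\ast\}$; combined with $\sum_{y\neq x_\ast}(f(x_\ast)-f(y))=|B|f(x_\ast)$, this gives $f(y)=-f(x_\ast)/(|B|-1)$ for every $y\neq x_\ast$. Comparing the two conclusions at a single $y\neq x_\ast$ yields $\pm f(x_\ast)=-f(x_\ast)/(|B|-1)$, and as $f(x_\ast)>0$ (since $f$ is nontrivial), this forces $|B|-1=1$, i.e., $|B|=2$.

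The delicate point, and the source of the precise constant $|B|/(|B|-1)^2$, is the choice of $x_\ast$ as a maximizer of $|f|$ rather than of $f$: this is exactly what upgrades the crude bound $\sum f^2 \leq (|B|-1) d_B D(u)$ one would get from an arbitrary base point, by buying back the factor $|B|$ through $\sum f^2 \leq |B| f(x_\ast)^2$. Once this bookkeeping is set up, the rigidity is essentially forced: the two Cauchy-Schwarz equality conditions (constant $f(x_\ast)-f(y)$ and uniform $|f(y)|=f(x_\ast)$) are incompatible except when $|B|=2$.
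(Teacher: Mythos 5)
Your proof is correct, but it takes a genuinely different route from the paper's. The paper (via its Theorem \ref{thm-Pe-g}, which specializes to the unit-weight case) works with \emph{two} extremal boundary vertices: a maximizer $x_1$ of $|f|$ and a minimizer $x_0$ of $f$; the mean-zero condition is converted into the pointwise bounds $f(x_1)\geq 1/\sqrt{|B|}$ and $f(x_0)\leq -1/\bigl((|B|-1)\sqrt{|B|}\bigr)$, and then only the energy of a \emph{single} shortest path from $x_0$ to $x_1$ is spent, with one Cauchy--Schwarz along that path. You instead fix only the maximizer $x_\ast$, telescope to every other boundary vertex separately (each time paying the full energy $D(u)$), and recover the sharp constant through an outer Cauchy--Schwarz over $B\setminus\{x_\ast\}$ together with the identity $\sum_{y\neq x_\ast}(f(x_\ast)-f(y))=|B|f(x_\ast)$; the constants indeed come out identical, and your equality analysis (constancy of $f(x_\ast)-f(y)$ versus $|f(y)|=f(x_\ast)$) cleanly forces $|B|=2$. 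What the paper's single-path version buys is twofold: it localizes the Dirichlet energy on one geodesic, which is exactly what drives the finer rigidity in Theorem \ref{thm-Pe-r} (arithmetic progression of $u_f$ along the path, minimal edge weights, comb structure), and it transfers verbatim to general weights with the sharp constant $w_0V_B/\bigl((V_B-m_0)^2d_B\bigr)$; your averaged argument, because it re-spends the full energy once per boundary vertex, gives less structural information at equality (though enough for $|B|=2$) and does not obviously reproduce the sharp weighted constant. As a minor point, you should note explicitly that equality forces $|B|\geq 2$ (otherwise $\sigma_2$ is infinite by convention), so that the set $B\setminus\{x_\ast\}$ is nonempty; this is immediate but worth a sentence.
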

Note that the lower bound in \eqref{eq-Pe-1} is sharp. For example, the equality of \eqref{eq-Pe-1} is attained when $G$ is path with the two end vertices the boundary vertices. For graphs equipped with general weight,  Perrin \cite{Pe} obtained the following lower bound.
\begin{thm}[Perrin \cite{Pe}]\label{thm-Pe-2}
Let $(G,B,m,w)$ be a connected weighted finite graph with boundary such that $E(B,B)=\emptyset$. Then,
\begin{equation}\label{eq-Pe-2}
\sigma_2\geq \frac{w_0}{d_BV_B}
\end{equation}
where
\begin{equation}
V_B=\sum_{x\in B}m_x\mbox{ and }w_0=\min_{e\in E(G)} w_e.
\end{equation}
\end{thm}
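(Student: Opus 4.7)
The plan is to use the Rayleigh-quotient characterization
\[
\sigma_2 = \inf\left\{\frac{D(f,f)}{\sum_{x\in B}m_x f(x)^2} : f: V(G) \to \R,\ \sum_{x\in B} m_x f(x)=0,\ f|_B \not\equiv 0\right\},
\]
where $D(f,f)=\sum_{xy\in E(G)}w_{xy}(f(x)-f(y))^2$ is the Dirichlet energy. Because the harmonic extension of $f|_B$ minimizes $D(f,f)$ among all functions agreeing with $f$ on $B$ (and does not change $\sum_{x\in B}m_x f(x)^2$), the infimum may be taken over all test functions, not only those harmonic on $V(G)\setminus B$. Consequently, the theorem reduces to proving the Poincar\'e-type inequality
\[
D(f,f) \geq \frac{w_0}{d_B V_B}\sum_{x\in B}m_x f(x)^2
\]
for every $f$ satisfying $\sum_{x\in B}m_x f(x)=0$.

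The first main step is a uniform bound on boundary differences: I will show that
\[
(f(x)-f(y))^2 \leq \frac{d_B}{w_0}\,D(f,f) \qquad \text{for all } x,y\in B.
\]
Fix a shortest path $x=z_0\sim z_1\sim\cdots\sim z_\ell=y$ of length $\ell=d(x,y)\leq d_B$. Telescoping and applying the Cauchy--Schwarz inequality, then using $w_{z_{i-1}z_i}\geq w_0$, give
\[
(f(x)-f(y))^2 \leq \ell\sum_{i=1}^{\ell}(f(z_i)-f(z_{i-1}))^2 \leq \frac{d_B}{w_0}\sum_{i=1}^{\ell}w_{z_{i-1}z_i}(f(z_i)-f(z_{i-1}))^2,
\]
and enlarging the right-hand sum to all edges of $G$ completes this step.

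The second step converts each boundary value into an average of differences via the orthogonality condition. Since $\sum_{x\in B}m_x f(x)=0$, for any $y\in B$ we have
\[
V_B\,f(y) = \sum_{x\in B}m_x\bigl(f(y)-f(x)\bigr),
\]
and Cauchy--Schwarz yields $f(y)^2 \leq \frac{1}{V_B}\sum_{x\in B}m_x(f(y)-f(x))^2$. Multiplying by $m_y$, summing over $y\in B$, and inserting the estimate from step one, I obtain
\[
\sum_{y\in B}m_y f(y)^2 \leq \frac{1}{V_B}\cdot\frac{d_B}{w_0}D(f,f)\cdot V_B^2 = \frac{d_B V_B}{w_0}D(f,f),
\]
which is exactly the required inequality. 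The proof is essentially a weighted-graph analogue of a standard path-integral Poincar\'e inequality, and I do not foresee any genuine obstacle; the only real choice is the order of chaining (path estimate first, mean-value identity second), which the presentation above adopts to keep the constant $w_0/(d_B V_B)$ crisp.
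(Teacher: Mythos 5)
Your proof is correct. Note that the paper does not prove Theorem \ref{thm-Pe-2} itself (it is quoted from Perrin); what it proves is the sharper bound of Theorem \ref{thm-Pe-g}, $\sigma_2\geq \frac{w_0V_B}{(V_B-m_0)^2d_B}$, which implies \eqref{eq-Pe-2}. Your argument and the paper's share the same core tool: telescoping along a shortest path between two boundary vertices, Cauchy--Schwarz, and $w_e\geq w_0$, costing a factor $d_B/w_0$. The difference is in how the constraint $\sum_{x\in B}m_xf(x)=0$ is used. You average: writing $V_Bf(y)=\sum_{x\in B}m_x(f(y)-f(x))$ for every $y\in B$, bounding \emph{all} boundary differences uniformly by the path estimate, and summing in $y$; combined with the Dirichlet-principle reduction to arbitrary test functions, this cleanly produces the constant $w_0/(d_BV_B)$ without ever touching the eigenfunction. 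The paper instead works with the normalized eigenfunction and only the two extremal boundary vertices: the maximum satisfies $f(x_1)\geq 1/\sqrt{V_B}$ and, via the mean-zero condition, the minimum satisfies $f(x_0)\leq -m_0/\bigl((V_B-m_0)\sqrt{V_B}\bigr)$, so running the path estimate between just this pair yields the stronger constant and, in the unit-weight case, recovers \eqref{eq-Pe-1}. Your averaging buys simplicity and generality of test functions, but it is intrinsically lossy (the uniform bound on $(f(x)-f(y))^2$ is attained only at the extremal pair), so it cannot reach the sharp constant of Theorem \ref{thm-Pe-g} nor feed the equality analysis of Theorem \ref{thm-Pe-r}. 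Like the paper, you never need the hypothesis $E(B,B)=\emptyset$, which is indeed superfluous for this bound.
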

Note that when $G$ is equipped with the unit weight in Theorem \ref{thm-Pe-2}, the lower bound \eqref{eq-Pe-2} is weaker than \eqref{eq-Pe-1} and is not sharp. So, Theorem \ref{thm-Pe-2} does not restore the lower bound in \eqref{eq-Pe-1} when the graph is equipped with the unit weight and is not an appropriate extension of Theorem \ref{thm-Pe-1} for general weighted graphs. 

In this short note, we first obtain a more appropriate extension of Theorem \ref{thm-Pe-1} for general weighted graphs.
\begin{thm}\label{thm-Pe-g}
Let $(G,B,m,w)$ be a connected weighted finite graph with boundary. Then,
\begin{equation}\label{eq-Pe-g}
\sigma_2\geq \frac{w_0V_B}{(V_B-m_0)^2d_B}
\end{equation}
where $m_0=\min_{x\in B}m_x$.
\end{thm}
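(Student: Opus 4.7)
My plan is to bound the Rayleigh quotient defining $\sigma_2$ from below directly. By the variational characterisation
\[
\sigma_2 = \inf\Biggl\{\frac{\mathcal{E}(f)}{\sum_{x\in B}m_xf(x)^2}\Biggr\},
\]
taken over all $f:V(G)\to\mathbb{R}$ with $f|_B\not\equiv 0$ and $\sum_{x\in B}m_xf(x)=0$ (where $\mathcal{E}(f)=\sum_{e\in E(G)}w_e(df(e))^2$ is the graph Dirichlet energy), it suffices to prove that every such $f$ satisfies
\[
\sum_{x\in B}m_xf(x)^2\leq \frac{(V_B-m_0)^2 d_B}{V_B w_0}\,\mathcal{E}(f).
\]

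The key step, which carries the whole improvement over the bound in Theorem~\ref{thm-Pe-2}, is a sharpened pointwise bound on $f|_B$. For any $x_0\in B$, multiplying $f(x_0)$ by $V_B=\sum_{x\in B}m_x$ and using the orthogonality condition gives
\[
V_Bf(x_0)=\sum_{x\in B}m_x\bigl(f(x_0)-f(x)\bigr)=\sum_{x\in B\setminus\{x_0\}}m_x\bigl(f(x_0)-f(x)\bigr).
\]
Applying the triangle inequality with $D:=\max_{x,y\in B}|f(x)-f(y)|$ yields $V_B|f(x_0)|\leq (V_B-m_{x_0})D\leq (V_B-m_0)D$. Squaring, weighting by $m_{x_0}$, and summing over $x_0\in B$ then produces
\[
\sum_{x\in B}m_xf(x)^2\leq \frac{(V_B-m_0)^2}{V_B}D^2.
\]

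To finish, I would bound $D$ by the Dirichlet energy via the standard shortest-path argument: pick $x^{\pm}\in B$ with $f(x^+)=\max_B f$ and $f(x^-)=\min_B f$, take a shortest path $x^-=y_0,y_1,\ldots,y_k=x^+$ in $G$ with $k\leq d_B$, and combine the telescoping identity, Cauchy-Schwarz, and $w_e\geq w_0$ on each edge of the path to obtain $D^2\leq d_B\mathcal{E}(f)/w_0$. Feeding this into the previous display gives exactly the required estimate. The only real obstacle is identifying the refined pointwise bound $V_B|f(x_0)|\leq (V_B-m_0)D$; the naive estimate $|f(x_0)|\leq D$ plugged into the same strategy only recovers a bound of the strength of \eqref{eq-Pe-2}, and it is the factor $(V_B-m_0)/V_B$ extracted from the mean-zero condition that upgrades the conclusion to the appropriate extension of Perrin's \eqref{eq-Pe-1}.
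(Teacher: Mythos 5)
Your proposal is correct and follows essentially the same route as the paper: the improvement over Theorem \ref{thm-Pe-2} comes from combining the weighted mean-zero condition on $B$ with the boundary oscillation, and the energy is then bounded below along a shortest path between boundary extremizers via Cauchy--Schwarz and $w_e\geq w_0$. The only difference is cosmetic: you derive $\sum_{x\in B}m_xf(x)^2\leq \frac{(V_B-m_0)^2}{V_B}D^2$ by summing the pointwise bound $V_B|f(x_0)|\leq (V_B-m_{x_0})D$ over all of $B$ for a general test function, whereas the paper normalizes an eigenfunction and extracts the same information from its maximum and minimum points on $B$.
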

It is clear that \eqref{eq-Pe-g} is stronger than \eqref{eq-Pe-2} and when the graph is equipped with the unit weight, \eqref{eq-Pe-g} becomes \eqref{eq-Pe-1}. So, Theorem \ref{thm-Pe-g} is an appropriate extension of Theorem \ref{thm-Pe-1} to general weighted graphs.

Secondly, we characterise the rigidity of \eqref{eq-Pe-g}.
\begin{thm}\label{thm-Pe-r}
Let $(G,B,m,w)$ be a connected weighted finite graph with boundary such that
$$\sigma_2=\frac{w_0V_B}{(V_B-m_0)^2d_B}.$$
Then, we have the following conclusions:
\begin{enumerate}
\item $|B|=2$ and $m_x=m_0$ for $x\in B$;
\item There is a unique path $P:v_0\sim v_1\sim\cdots\sim v_{d_B}$ with $v_0,v_{d_B}\in B$. Moreover, $w_{v_{i-1}v_i}=w_0$ for $i=1,2,\cdots, d_B$;
\item $G$ is a comb over $P$.
\end{enumerate}
\end{thm}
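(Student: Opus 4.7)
The plan is to revisit the proof of Theorem \ref{thm-Pe-g} and identify exactly when each of its constituent inequalities is tight. Any derivation of \eqref{eq-Pe-g} via an $L^2$-normalized Steklov eigenfunction $f$ for $\sigma_2$ naturally decomposes into four ingredients: (a) the path Cauchy--Schwarz estimate $(f(x)-f(y))^2 \le (d(x,y)/w_0)\,D(f)$, valid along any geodesic, which combined with $D(f)=\sigma_2$ gives $(f(x_0)-f(y))^2\le (d_B/w_0)\sigma_2$ for every $y\in B$, where $c:=f(x_0)=\max_B f\ge 0$ and $b:=f(y_0)=\min_B f\le 0$; (b) the orthogonality consequences $c\le (V_B-m_{x_0})\delta/V_B$ and $|b|\le (V_B-m_{y_0})\delta/V_B$ with $\delta=\sqrt{d_B\sigma_2/w_0}$, obtained from $\sum_{x\in B}m_xf(x)=0$ and the pointwise bound from (a); (c) the variance bound $1=\|f\|_B^2\le V_Bc|b|$ coming from $b\le f(y)\le c$ together with zero mean; and (d) the elementary inequality $(V_B-m_{x_0})(V_B-m_{y_0})\le (V_B-m_0)^2$. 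Multiplying these produces \eqref{eq-Pe-g}, and the equality hypothesis forces each of (a)--(d) to be an equality.

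The main obstacle is extracting $|B|=2$. Equality in (b) at $x_0$ forces $f(y)=c-\delta$ for every $y\in B\setminus\{x_0\}$; symmetrically, equality at $y_0$ forces $f(y)=b+\delta$ for every $y\in B\setminus\{y_0\}$. Were there a third boundary vertex $y\in B\setminus\{x_0,y_0\}$, combining the two would yield $c-b=2\delta$, whereas the equality case of (a) applied to the pair $(x_0,y_0)$ gives $(c-b)^2=\delta^2$, i.e.\ $c-b=\delta$. Since $\delta>0$, these are incompatible, so $B=\{x_0,y_0\}$. Equality in (d) then forces $m_{x_0}=m_{y_0}=m_0$, completing item~(1).

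Items (2) and (3) come from unpacking the equality case of (a). The equality at $(x_0,y_0)$ forces, simultaneously, that some geodesic $P:v_0=x_0\sim v_1\sim\cdots\sim v_{d_B}=y_0$ has length exactly $d_B$, that $f(v_{i-1})-f(v_i)$ takes the common value $(c-b)/d_B>0$ on every edge of $P$, that $w_{v_{i-1}v_i}=w_0$ on every such edge, and that $df\equiv 0$ on every edge of $G$ not lying in $P$. Uniqueness of $P$ is then a short counting argument: along any alternative geodesic $P'$ of length $d_B$ from $x_0$ to $y_0$, each edge contributes $0$ or $\pm(c-b)/d_B$ to the total drop $c-b$, so achieving this total in $d_B$ steps forces every edge of $P'$ to be some $v_{i-1}v_i$ traversed in the $P$-direction, whence $P'=P$. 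Finally, the condition $df\equiv 0$ off $P$, together with the distinctness of $f(v_0),\dots,f(v_{d_B})$, implies that every connected component of $(V,E\setminus E(P))$ contains exactly one $v_i$; the induced partition $V=\bigsqcup_{i=0}^{d_B}V_i$ with $v_i\in V_i$ and with no cross-edges between distinct $V_i$'s outside $P$ is precisely the comb structure, yielding (2) and (3). Harmonicity of $f$ at each interior vertex $v_i$ reconfirms the weight equalities consistently.
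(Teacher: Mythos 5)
Your proposal is correct, but it reaches \eqref{eq-Pe-g} and its equality case by a genuinely different decomposition than the paper. The paper argues one-sidedly: it normalizes $f$ so that $\langle f,f\rangle_B=1$, uses the vertex of maximal $|f|$ to get $f(x_1)\ge 1/\sqrt{V_B}$, bounds $f(x_0)$ by a single application of the zero-mean condition, and runs the Cauchy--Schwarz chain \eqref{eq-sigma} along one geodesic; in the rigidity analysis, conclusion (1) is obtained because equality forces $f(x_1)=1/\sqrt{V_B}$ and hence $|f|\equiv 1/\sqrt{V_B}$ on all of $B$, which together with $f(x_0)=-m_0/\bigl((V_B-m_0)\sqrt{V_B}\bigr)$ yields $V_B=2m_0$ and so $|B|=2$, $m_x=m_0$. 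You instead work symmetrically at both extrema $c=\max_B f$, $b=\min_B f$, combining the oscillation bound $|f(x)-f(y)|\le\delta$ with $\delta^2=d_B\sigma_2/w_0$, the zero-mean estimates $c\le (V_B-m_{x_0})\delta/V_B$ and $|b|\le (V_B-m_{y_0})\delta/V_B$, the variance inequality $1\le V_B c|b|$, and $(V_B-m_{x_0})(V_B-m_{y_0})\le (V_B-m_0)^2$; equality then pins the boundary values ($f\equiv c-\delta$ on $B\setminus\{x_0\}$, $f\equiv b+\delta$ on $B\setminus\{y_0\}$), a third boundary vertex is excluded by the clash $c-b=2\delta$ versus $c-b=\delta$, and $m_{x_0}=m_{y_0}=m_0$ falls out of your step (d). This buys a transparent, symmetric route to conclusion (1), at the cost of a slightly longer inequality chain; the path and comb analysis (equal increments, all path weights equal to $w_0$, $du_f=0$ off $P$, and the distinct values $u_f(v_i)$ forcing disjoint components of $G-E(P)$) is essentially the paper's argument. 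Two small points of precision: the equality hypothesis saturates only the specific instances of your (a) that enter the product chain, and equality of (a) at the pair $(x_0,y_0)$ is not immediate from the chain itself but follows because the (b)-equality at $x_0$ already gives $f(y_0)=c-\delta$, i.e.\ $c-b=\delta$; and your uniqueness count only rules out other geodesics of length $d_B$, whereas the paper concludes the stronger statement that $P$ is the only path joining the two boundary vertices --- that stronger uniqueness follows immediately from the comb structure you establish in (3), so no real gap results.
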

Conversely, it is not hard to check that when the graph $(G,B,m,w)$ satisfies (1)--(3) in Theorem \ref{thm-Pe-r}. Then the equality of \eqref{eq-Pe-g} holds. For the definition of a comb, see Definition \ref{def-comb}.

Finally, we would like to mention that the graphs with boundary we considered in this paper are more general than those in \cite{Pe}. More precisely, we don't assume that $E(B,B)=\emptyset$ a priori.

The rest of the paper is organized as follows. In Section 2, we introduce some preliminaries on Steklov operators and Steklov eigenvalues for graphs; In Section 3, we prove Theorem \ref{thm-Pe-g} and Theorem \ref{thm-Pe-r}.
\section{Preliminary}
In this section, we introduce some preliminaries on Steklov operators and Steklov eigenvalues for graphs.

We first introduce the notion of weighted graphs with boundary.
\begin{defn}
A quadruple $(G,B,m,w)$ is called a weighted graph with boundary if
\begin{enumerate}
\item $G=(V,E)$ is a simple graph and $B\subset V$;
\item $m:V\to \R^+$ and $w:E\to \R^+$.
\end{enumerate}
The set $B$ is called the boundary of $G$, $\Omega:=V\setminus B$ is called the interior of $G$, $m$ is called the vertex-measure and $w$ is called the edge-weight.  When $m\equiv 1$ and $w\equiv 1$, $G$ is said to equip with the unit weight.
\end{defn}

For convenience, we also view $w$ as a symmetric function on $V\times V$ with
\begin{equation*}
w_{xy}=\left\{\begin{array}{ll}w(e)& e=\{x,y\}\in E\\
0&\{x,y\}\notin E
\end{array}\right.
\end{equation*}

Let $(G,B,m,w)$ be a connected weighted finite graph with boundary. Denote the  space of functions on $V$ as $A^0(G) $ and the space of skew-symmetric functions $\alpha$ on $V\times V$ such that $\alpha(x,y)=0$ when $x\not\sim y$ as $A^1(G)$. Equip $A^0(G)$ and $A^1(G)$ with the natural inner products:
\begin{equation*}
\vv<u,v>=\sum_{x\in V}u(x)v(x)m_x
\end{equation*}
and
\begin{equation*}
\vv<\alpha,\beta>=\sum_{\{x,y\}\in E}\alpha(x,y)\beta(x,y)w_{xy}=\frac12\sum_{x,y\in V}\alpha(x,y)\beta(x,y)w_{xy}
\end{equation*}
respectively. For any $u\in A^0(G)$, define the differential $du$ of $u$ as
\begin{equation*}
du(x,y)=\left\{\begin{array}{ll}u(y)-u(x)&\{x,y\}\in E\\0&\mbox{otherwise.}\end{array}\right.
\end{equation*}
Let $d^*:A^1(G)\to A^0(G)$ be the adjoint operator of $d:A^0(G)\to A^1(G)$. The Laplacian operator on $A^0(G)$ is defined as
\begin{equation*}
\Delta=-d^*d.
\end{equation*}
By direct computation,
\begin{equation*}
\Delta u(x)=\frac{1}{m_x}\sum_{y\in V}(u(y)-u(x))w_{xy}
\end{equation*}
for any $x\in V$. Moreover, by the definition of $\Delta$, it is clear that
\begin{equation}\label{eq-integration-by-part}
\vv<\Delta u,v>=-\vv<du,dv>
\end{equation}
for any $u,v\in A^0(G)$.

Moreover, for any $u\in A^0(G)$ and $x\in B$, define the outward normal derivative of $u$ at $x$ as:
\begin{equation}\label{eq-normal-derivative}
\frac{\p u}{\p n}(x):=\frac{1}{m_x}\sum_{y\in V}(u(x)-u(y))w_{xy}=-\Delta u(x).
\end{equation}
Then, by \eqref{eq-integration-by-part}, one has the following Green's formula:
\begin{equation}\label{eq-Green}
\vv<\Delta u,v>_\Omega=-\vv<du,dv>+\vv<\frac{\p u}{\p n},v>_B.
\end{equation}
Here, for any set $S\subset V$,
\begin{equation*}
\vv<u,v>_S:=\sum_{x\in S}u(x)v(x)m_x.
\end{equation*}

For each $f\in \R^B$, let $u_f$ be the harmonic extension of $f$ into $\Omega$:
\begin{equation*}
\left\{\begin{array}{ll}\Delta u_f(x)=0&x\in\Omega\\
u_f(x)=f(x)&x\in B.
\end{array}\right.
\end{equation*}
Define the Steklov operator $\Lambda:\R^B\to \R^B$ as
\begin{equation*}
\Lambda(f)=\frac{\p u_f}{\p n}.
\end{equation*}
By \eqref{eq-Green},
\begin{equation*}
\vv<\Lambda(f),g>_B=\vv<du_f,du_g>
\end{equation*}
for any $f,g\in \R^B$. This implies that $\Lambda$ is a nonnegative self-adjoint operator on $\R^B$. The eigenvalues of $\Lambda$ is called the Steklov eigenvalues of $(G,B,m,w)$. Let
\begin{equation*}
0=\sigma_1<\sigma_2\leq\cdots\leq \sigma_{|B|}
\end{equation*}
be the eigenvalues of $\Lambda$. Here, $\sigma_1=0$ because nonzero constant functions are the corresponding eigenfunctions and $\sigma_2>0$ because we  assume that $G$ is connected. It is clear that
$$\sigma_2=\min_{0\neq f\in \R^B:\vv<f,1>_B=0}\frac{\vv<du_f,du_f>}{\vv<f,f>_B}.$$
When $i>|B|$, we take the convection that $\sigma_i=+\infty$.

Finally, recall the notion of a comb (see \cite{YY1}).
\begin{defn}\label{def-comb}
Let $G$ be a connected graph and $S$ be its connected subgraph. For any $x\in S$, denote the connected component of $G-E(S)$ containing $x$ as $G_x$. If for any different vertices $x,y\in S$, $G_x\cap G_y=\emptyset$. Then, $G$ is called a comb over $S$.
\end{defn}
\section{Proofs of main results}
In this section, we prove Theorem \ref{thm-Pe-g} and Theorem \ref{thm-Pe-r}. Although the proof of Theorem \ref{thm-Pe-g} is only a slight modification of Perrin's original proof in \cite{Pe}, we will present the details here for convenience when discussing the rigidity.
\begin{proof}[Proof of Theorem \ref{thm-Pe-g}] If $|B|\leq 1$, then $\sigma_2=+\infty$. There is nothing to prove. So, assume that $|B|\geq 2$.

Let $f\in \R^B$ be an eigenfunction of $\sigma_2$. Then
\begin{equation}\label{eq-f-1}
\sum_{x\in B}f(x)m_x=\vv<f,1>_B=0.
\end{equation}
We further assume that
\begin{equation}\label{eq-f-2}
\sum_{x\in B}f^2(x)m_x=\vv<f,f>_B=1.
\end{equation}
Let $x_1\in B$ be a vertex such that
\begin{equation}\label{eq-f-3}
|f(x_1)|=\max_{x\in B} |f(x)|.
\end{equation}
We can assume that $f(x_1)>0$. Otherwise, this can be done by just replacing $f$ by $-f$. Then, by \eqref{eq-f-2}, we know that
\begin{equation}\label{eq-f-max}
f(x_1)\geq \frac{1}{\sqrt{V_B}}.
\end{equation}
Moreover, let $x_0\in B$ be a vertex such that
\begin{equation}
f(x_0)=\min_{x\in B} f(x).
\end{equation}
By \eqref{eq-f-1}, we have
\begin{equation*}
\begin{split}
-f(x_1)m_{x_1}=\sum_{x\in B\setminus\{x_1\}}f(x)m_x
\geq f(x_0)(V_B-m_{x_1}).
\end{split}
\end{equation*}
Combining this with \eqref{eq-f-max}, one has
\begin{equation}\label{eq-f-min}
f(x_0)\leq -\frac{m_{x_1}}{(V_B-m_{x_1})\sqrt{V_B}}\leq -\frac{m_0}{(V_B-m_0)\sqrt{V_B}}.
\end{equation}
Let
$$P:\ x_0=v_0\sim v_1\sim v_2\sim\cdots\sim v_l=x_1$$
be a shortest path  in $G$ connecting $x_0$ to $x_1$. It is clear that $l\leq d_B$. Then,
\begin{equation}\label{eq-sigma}
\begin{split}
\sigma_2=&\vv<du_f,du_f>\\
=&\sum_{\{x,y\}\in E}(u_f(x)-u_f(y))^2w_{xy}\\
\geq&\sum_{i=1}^l(u_f(v_i)-u_f(v_{i-1}))^2w_{v_{i-1}v_i}\\
\geq&w_0\sum_{i=1}^l(u_f(v_i)-u_f(v_{i-1}))^2\\
\geq& \frac{w_0}{l}(f(x_1)-f(x_0))^2\\
\geq&\frac{w_0V_B}{(V_B-m_0)^2d_B}
\end{split}
\end{equation}
by the Cauchy-Schwarz inequality, \eqref{eq-f-max} and \eqref{eq-f-min}. This completes the proof of the theorem.
\end{proof}
We next come to characterise the rigidity of \eqref{eq-Pe-g}.
\begin{proof}[Proof of Theorem \ref{thm-Pe-r}]
Let the notations be the same as in the proof of Theorem \ref{thm-Pe-g}. When the equality of \eqref{eq-Pe-g} holds, we know that the inequalities in \eqref{eq-sigma}
become equalities. Thus,
\begin{enumerate}
\item[(i)] $l=d_B$, $f(x_1)=\frac{1}{\sqrt V_B}$ and $f(x_0)=-\frac{m_0}{(V_B-m_0)\sqrt{V_B}}$;
\item[(ii)] $u_f(v_0)(=f(x_0)),u_f(v_1),\cdots, u_f(v_l)(=f(x_1))$ is an arithmetic progress;
\item[(iii)] $w_{v_{i-1}v_i}=w_0$ for $i=1,2,\cdots,l$;
\item[(iv)] for any $\{x,y\}\in E(G)\setminus E(P) $, $u_f(x)=u_f(y)$.
\end{enumerate}
By (iv), we know that for $i=0,1,\cdots, l$ and $x\in G_{v_i}$ where $G_{v_i}$ is the connected component of $G-E(P)$ containing $v_i$,
\begin{equation*}
u_f(x)=u_f(v_i).
\end{equation*}
By (ii), $u_{f}(v_i)\neq u_f(v_j)$ for any $0\leq i<j\leq l$. So
$$G_{v_i}\cap G_{v_j}=\emptyset$$
for any $0\leq i<j\leq l$. Thus, $G$ is a comb over $P$ and $P$ is the only path in $G$ joining $v_0(=x_0)$ and $v_l(=x_1)$.

Moreover, by (i), \eqref{eq-f-3} and  \eqref{eq-f-2}, we know that
\begin{equation*}
|f(x)|=\frac{1}{\sqrt{V_B}},\ \forall x\in B.
\end{equation*}
So
\begin{equation*}
f(x_0)=-\frac{1}{\sqrt V_B}\mbox{ and } V_B=2m_0
\end{equation*}
by (i). Thus, $B=\{x_0,x_1\}$ and $m_{x_0}=m_{x_1}=m_0$.
This completes the proof of Theorem \ref{thm-Pe-r}.
\end{proof}

\end{document}